\documentclass[a4paper,11pt,reqno]{amsart}
\usepackage{amsthm}
\usepackage{amsmath}
\usepackage{enumerate}
\usepackage{amsfonts}
\usepackage{amssymb}
\usepackage{fullpage}
\usepackage{amsmath,amscd}
\usepackage{stmaryrd}
\usepackage{graphicx}
\usepackage{array}
\usepackage{amsmath,amssymb,amsfonts,dsfont}
\usepackage[utf8]{inputenc} 
\usepackage[english]{babel}
\usepackage[T1]{fontenc} 
\usepackage{graphicx}
\usepackage{float}
\usepackage{pdfpages}
\usepackage[a4paper, margin = 3cm, bottom = 3cm]{geometry}
\usepackage{ifpdf}
\usepackage{marginnote}
\usepackage{hyperref}	
\hypersetup{pdfborder=0 0 0, 
	    colorlinks=true,
	    citecolor=black,
	    linkcolor=blue,
	    urlcolor=red,
	    pdfauthor={Guillaume Tahar}
	   }
\newtheorem{thm}{Theorem}[section]
\newtheorem{cor}[thm]{Corollary}
\newtheorem{prop}[thm]{Proposition}
\newtheorem{lem}[thm]{Lemma}
\theoremstyle{definition}
\newtheorem{defn}[thm]{Definition}
\theoremstyle{remark}
\newtheorem{rem}[thm]{Remark}
\theoremstyle{definition}

\theoremstyle{definition}

\theoremstyle{definition}

\numberwithin{equation}{section} 
\pagestyle{plain} 
\title{Triangulations of branched affine surfaces}
\author{Guillaume Tahar}
\address[Guillaume Tahar]{Faculty of Mathematics and Computer Science, Weizmann Institute of Science,
Rehovot, 7610001, Israel}
\email{tahar.guillaume@weizmann.ac.il}
\date{November 29, 2019}
\keywords{Flip, Triangulated Surface, Flat structure, Branched Affine structure, Dilation surface, Stable curves}
\begin{document}
\begin{abstract}
A branched affine structure on a compact topological surface with marked points is a complex affine structure outside the marked points. We give a proof of an unpublished foundational theorem of Veech, stating that any branched affine surface can be decomposed into affine triangles and some annulus-shaped cylinders. Then, we prove that any pair of such decompositions can be connected by a chain of flips. As a first step toward a compactification of the moduli spaces of branched affine structures, we introduce invariant $\alpha$ of a branched affine surface that controls degeneracy of the structure. Finally, we consider some examples of compactification of spaces of branched affine surfaces by stable curves.\newline
\end{abstract}
\maketitle
\setcounter{tocdepth}{1}
\tableofcontents

\section{Introduction}

For a compact orientable topological surface with marked points, a \textit{branched affine structure} is an atlas of charts to the complex plane defined on the complement of the marked points such that the transition maps are affine maps $x \mapsto ax+b$ with $a \in \mathbb{C}^{*}$ and $b \in \mathbb{C}$, see \cite{G1}. Many branched affine surfaces appear as polygons where pairs of side are arbitrarily identified (without necessarily respecting directions and lengths).\newline
Any notion that makes sense in the affine complex plane is well-defined in a branched affine surface. In particular, we have straight lines, angles. In that way, they are generalizations of translation surfaces whose directional flows have been deeply studied, see \cite{Zo}. Geometric triangulations of similar structures have already been studied.\newline
Concerning flat surfaces with conical singularities (corresponding to the case where the coefficients of transition maps are required to be of modulus one), there is already a proof that they can be triangulated and that the flip graph of their triangulations is connected, see \cite{Ta}. However, there is no globally defined notion of length for branched affine structures and some topological arcs do not have geodesic representatives. The proof does not generalize.\newline
In the context of branched affine surfaces, there is an unpublished proof of Veech (see \cite{V}) that involves maximal embedded disks and Delaunay triangulations. The main ideas have been explained in \cite{DFG}. We would like to present an elementary proof that branched affine surfaces can be essentially triangulated (Theorem 1.1) and also adress the issue of flips (Theorem 1.2). A flip is a local operation where the diagonal of a convex quadrilateral is replaced by the other diagonal to get a slightly different triangulation. The topological problem of connecting two different topological triangulations of a topological surface by a chain of flips has been solved in \cite{FST}.\newline
This paper is also an opportunity to introduce several technical results (Lemmas 2.4, 3.1 and 3.3 in particular) that could be helpful by themselves in any further study of branched affine or dilation surfaces.\newline

\begin{thm}
A branched affine surface $X$ can be triangulated if and only if there is no hyperbolic cylinder of angle at least $\pi$ in $X$.\newline
\end{thm}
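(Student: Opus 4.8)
The plan is to prove the two implications separately, with the geometric heart of both being a single elementary fact about straight segments in $\C$: if a line does not pass through the origin, then as one traverses it the argument $\arg z$ varies monotonically and sweeps an interval of length exactly $\pi$; consequently a straight segment sweeps an angular interval of length strictly less than $\pi$. I will use this to analyze arcs inside a hyperbolic cylinder $C$, whose universal cover develops onto an infinite sector of angular width $\theta$ (the angle of $C$), with the core holonomy acting as a dilation $z \mapsto \lambda z$ fixing the apex. In this model the two boundary components of $C$ develop onto the two bounding rays of the sector, so that any arc running from one boundary component of $C$ to the other develops onto a chord joining the two rays, forcing $\arg z$ to vary by at least $\theta$.

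For necessity, suppose $X$ carries a triangulation and let $C$ be a hyperbolic cylinder of angle $\theta \ge \pi$. Since the interior of $C$ contains no marked point, no edge of the triangulation can terminate there; hence every edge meeting the interior of $C$ enters and leaves through $\partial C$, and its portion inside $C$ is a geodesic, that is, a straight chord of the developed sector. By the fact above such a chord sweeps argument strictly less than $\pi \le \theta$, so it cannot join the two bounding rays: no edge crosses $C$ from one boundary component to the other. Every properly embedded arc of the $1$-skeleton inside the annulus $C$ therefore has both its endpoints on the same boundary circle and is boundary-parallel, and removing finitely many such arcs leaves a sub-annulus containing the core curve of $C$. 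This sub-annulus lies in a single complementary region of the $1$-skeleton, which is then not a disk, contradicting the assumption that all complementary regions are triangles. Hence no such $C$ can exist.

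For sufficiency I would build a triangulation by a maximal (greedy) construction. Taking the marked points as vertices, I add saddle connections — geodesic segments between marked points, meeting no marked point in their interior and transverse to the ones already chosen — one at a time until the family is maximal, in the sense that no further saddle connection can be added without crossing one already present. The complement of a maximal family is a union of regions bounded by saddle connections, each free of marked points in its interior; the core of the argument is to show that each such region is either an affine triangle or an annulus, and that a surviving annular region must be a hyperbolic cylinder of angle $\ge \pi$. Indeed, any region of angle $< \pi$ — whether a would-be polygon admitting an interior diagonal, a hyperbolic cylinder of small angle, or a Euclidean cylinder — always admits a geodesic chord splitting it further (for a hyperbolic cylinder of angle $< \pi$ the sweep fact above produces a chord joining the two bounding rays, and a Euclidean cylinder is crossed by a segment transverse to its core), contradicting maximality. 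Thus, under the hypothesis that no hyperbolic cylinder of angle $\ge \pi$ occurs, no annular region survives and the complement consists entirely of triangles.

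The main obstacle is the sufficiency direction, and specifically the existence and control of saddle connections in a setting with no global length function: as noted in the introduction, not every topological arc has a geodesic representative, so I cannot simply straighten arcs. I expect to deploy the technical lemmas (Lemmas 2.4, 3.1 and 3.3) to guarantee that enough saddle connections exist, that the greedy process terminates with a locally finite family, and that each complementary region is controlled — ruling out monogons and bigons, and identifying every non-triangular region as a standard cylinder whose angle is read off from its developed sector. Once the complementary regions are pinned down to triangles and hyperbolic cylinders of angle $\ge \pi$, the equivalence follows by combining this classification with the crossing obstruction established in the necessity direction.
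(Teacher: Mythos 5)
Your necessity direction is sound, and it matches the paper's key geometric observation (made inside the proof of Lemma 2.10): a geodesic chord of a hyperbolic cylinder lifts to a straight segment in the developed sector, a straight segment avoiding the apex sweeps an argument interval of length strictly less than $\pi$, so no edge can run from one boundary ray to the other when the angle is at least $\pi$. One small repair is needed at the end: from ``the sub-annulus lies in a single complementary region'' you conclude that the region ``is then not a disk'', but an annulus embeds perfectly well in a disk, so that sentence does not close the argument. What does close it is holonomy: the core closed geodesic has linear holonomy $\lambda \neq 1$, hence is not null-homotopic in $X \setminus \Lambda$, whereas any loop contained in an open triangle of the triangulation is contractible there. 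That contradiction is routine to insert, so I count this direction as correct.

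The genuine gap is the sufficiency direction, which is where essentially all the substance of the theorem lies. Your plan --- take a maximal family of pairwise disjoint saddle connections and classify the complementary regions --- is exactly the paper's strategy, but the classification step you assert (``each region is either an affine triangle or an annulus, and any region of angle $<\pi$ admits a geodesic chord'') is a restatement of what must be proved, not an argument. A complementary region of a maximal family is an arbitrary branched affine surface with boundary: it can have positive genus and several boundary components, and a general region has no ``angle'' to speak of; the whole difficulty, which you yourself flag, is that topological arcs in it need not have geodesic representatives, so no straightening or greedy chord-finding is available off the shelf. This missing step is precisely the paper's Lemma 2.10 (any branched affine surface with boundary that is not a triangle or a hyperbolic cylinder of angle at least $\pi$ contains an interior saddle connection), whose proof occupies most of Section 2: a minimal-counterexample argument in which one analyzes critical trajectories emanating from the boundary, rules out infinite trajectories via the cone-immersion lemma (Lemma 2.4: an affine immersion of a cone of angle $\theta$ forces a hyperbolic cylinder of angle at least $\theta$ on whose closed geodesics every ray accumulates), handles self-intersecting trajectories via maximal families of broken geodesics, and invokes the self-folded-triangle Lemma 2.8 to deal with conical singularities of angle less than $\pi$. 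Your proposal only gestures at ``deploying the technical lemmas''; note moreover that Lemmas 3.1 and 3.3 cannot serve this purpose, since they concern disjointness of long cylinders and geodesic representatives inside an \emph{already triangulated} locus, so using them to produce the triangulation would be circular. As it stands, the forward implication is unproved.
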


\begin{thm}
For a triangulable branched affine surface, any pair of geometric triangulations can be connected by a chain of flips.\newline
\end{thm}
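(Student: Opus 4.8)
The plan is to connect $T$ to $T'$ by installing the edges of $T'$ into $T$ one at a time. Fixing $T'$, I would measure progress by the geometric intersection number $i(T,T')$, the total number of transverse crossings between the (geodesic) edges of the current triangulation $T$ and those of $T'$, edges common to both contributing nothing. First I would record that, since both triangulations are geodesic and their vertices are exactly the marked points, their edges are already in minimal position: an embedded bigon bounded by two straight arcs with no marked point in its interior would develop to a genuine bigon in $\C$ bounded by two straight segments sharing their endpoints, which is impossible. (I expect this to be supplied by one of the technical lemmas, e.g. Lemma 3.1 or 3.3.) Consequently crossings cannot be removed by isotopy, only by flips, and $i(T,T')=0$ forces every edge of $T'$ to already be an edge of $T$, hence $T=T'$ since they have the same number of edges.

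For the inductive step, I would pick an edge $g$ of $T'$ that is not yet an edge of $T$ and show that finitely many geometric flips, each on an edge of $T$ crossed by $g$, turn $g$ into an edge of the current triangulation while strictly decreasing $i(\,\cdot\,,T')$. The model case is instructive: if $g$ crosses $T$ exactly once, across the edge $e_0$ shared by triangles $t_0,t_1$, then the endpoints of $g$ are precisely the two apices of $t_0$ and $t_1$ opposite $e_0$; the quadrilateral $t_0\cup t_1$ has diagonal $e_0$, and the very fact that $g$ crosses $e_0$ transversally means the endpoints of $e_0$ lie on opposite sides of the line supporting $g$. Hence the quadrilateral is convex, the flip of $e_0$ is geometrically valid, and its new diagonal is exactly $g$, so a single convex flip installs $g$.

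In general I would develop onto the plane the chain of triangles of $T$ traversed by $g$, glued consecutively along the edges that $g$ crosses. Since $g$ is an embedded arc ending at marked points, this chain is an abstract triangulated disk, and the developing map immerses it as a polygonal strip $P$ in which $g$ is a straight chord joining two boundary vertices and whose interior edges are exactly the crossings of $g$ with $T$. A flip of such an interior edge is geometrically valid precisely when the corresponding developed quadrilateral is convex, a purely local condition read off correctly from the immersion even when the affine holonomy prevents $P$ from embedding globally. Now I would invoke the classical fact that the triangulations of a simple polygon are connected through flips of convex quadrilaterals, applied to $P$ with the chord $g$ as a marked diagonal: there is a sequence of convex flips transforming the induced triangulation of $P$ into the one containing $g$. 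Lifting this sequence to $X$ installs $g$ by valid geometric flips.

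Crucially, these flips never disturb the edges of $T'$ already installed: such edges are disjoint from $g$, hence are not interior edges of $P$ and are never flipped, and the flips within $P$ create no new crossings with them. Therefore each completed installation strictly lowers the number of not-yet-installed edges of $T'$, the procedure terminates, and the final triangulation contains all of $T'$ and thus equals it. I expect the main obstacle to be exactly the geometric validity of the flips, namely guaranteeing convexity of the quadrilaterals one flips: the model computation shows that the crossing itself forces convexity in the decisive flip, and reducing the general case to the flip-connectivity of polygon triangulations is what makes this obstacle manageable, while verifying that the local developed picture faithfully controls convexity in the presence of nontrivial affine holonomy (and that revisited triangles cause no ambiguity, by minimal position) is the delicate point that the technical lemmas of Sections 2 and 3 are designed to supply.
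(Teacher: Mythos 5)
Your base case is sound (in the one-crossing situation the entry segment must come from the apex, so $g$ joins the two apices, and a simple quadrilateral whose diagonals cross internally is convex), but the general step has a genuine gap, and it sits precisely at the spot you flag with ``revisited triangles cause no ambiguity, by minimal position.'' Minimal position only excludes bigons, i.e.\ inessential crossings; it does not prevent $g$ from crossing the \emph{same} edge or triangle of $T$ many times (already on a flat torus with one marked point, a long saddle connection crosses a short edge of the triangulation arbitrarily often, with every crossing essential). When that happens, your strip $P$ is only an immersed disk with repeated triangles, and two things break. First, the classical flip-connectivity of triangulations of a simple polygon, which is a global statement about an embedded planar region, does not apply to an immersed $P$; convexity of each developed quadrilateral is indeed local, but the \emph{existence} of a flip sequence reaching the chord $g$ is not. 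Second, and more fatally, flips of the abstract strip do not lift to flips of $X$: an edge $e$ of $T$ crossed $k$ times by $g$ appears as $k$ distinct interior edges of $P$, and performing the flip in $X$ changes all $k$ copies simultaneously, so ``lifting this sequence to $X$'' is not well defined; moreover intermediate diagonals of the abstract $P$ need not project to embedded saddle connections of $X$, and your claim that the flips create no new crossings with already-installed edges of $T'$ also relies on $P$ being embedded.

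For comparison, the paper avoids edge insertion entirely and argues by minimal counterexample: if $S$ and $T$ share an interior edge, cut along it; if some edge of $S$ is disjoint from some edge of $T$, complete the pair to a common triangulation $Z$ (using Lemma 2.7) and connect $S$ to $T$ through $Z$; otherwise every edge of $S$ crosses every edge of $T$, and desingularizing crossings --- with Lemma 3.3 supplying geodesic representatives for the desingularized arcs in the triangulated locus --- forces the singular locus to be connected and the genus to be zero, so $X$ is a polygon, hence a translation surface, where flip connectivity is the main theorem of \cite{Ta}. So the technical lemmas you hoped would control the immersed developed picture are in fact used for a different purpose (realizing desingularized arcs geodesically), and the holonomy difficulty is discharged by reducing to the trivial-holonomy case rather than by analyzing strips. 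To salvage your approach you would need a genuinely new argument for installing a chord across a non-embedded channel of triangles --- for instance a measure that decreases one crossing at a time with a certified convex flip at each stage --- which is essentially the hard content the paper outsources to \cite{Ta}.
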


A motivation to study triangulations of branched affine structures is about compactification. Degeneration of a translation structure corresponds to the shrinking of a loop or a geodesic segment connecting two singularities, see \cite{BCGGM}. Since we do not have a good notion of length, we have to work with angles. Instead of considering degeneration of edges, compactification of branched affine structure will need taking into account degeneration of triangles. The more we know about the geometry of the space of the triangulations of a given branched affine surface, the more we will be able to understand how the structure degenerates independantly from the arbitrary choice of the triangulation. For this purpose, we introduce invariant $\alpha$ of a branched affine surface that is measure of how far is the surface from being tiled by equilateral triangles. Bounded values of this invariant define compact parts of the moduli space of such structures (Proposition 4.2). Finally, we consider some examples of compactifications of spaces of branched affine structures.\newline

\section{Branched affine structures and geometric triangulations}

\subsection{Singularities and trajectories}

As said in the introduction, branched affine structures are affine structure structures outside the marked points. We also require that the singularities are \textit{conical singularities}. Before defining conical singularities, we have to study the holonomy character of the structure.\newline

In a branched surface $X$ punctured at the marked points $\Lambda$ (we always assume $\Lambda \neq \emptyset$), every path can be covered with charts of the atlas. The transition map between the first chart and the last chart is an affine map. Its linear part is well-defined up to conjugacy. The \textit{linear holonomy} of paths is a topological invariant given by a group morphism $\rho: H_{1}(X \setminus \Lambda, \mathbb{Z}) \longrightarrow \mathbb{C}^{*}$. If the linear holonomy of any loop is real, the branched affine surface is a \textit{dilation surface}. If the linear holonomy of any loop is trivial, it is a \textit{translation surface}.\newline

Another topological invariant of paths is the index. Let $\gamma$ be a simple closed curve in a branched affine surface. In any chart, $\theta'(t)$ is the argument of tangent vector at $\gamma(t)$. We define $\Theta(\gamma)=\dfrac{1}{2\pi}\int_{0}^{T} \theta'(t)dt \in \mathbb{R}$ to be the \textit{index} of $\gamma$. Obviously, the index of a loop and the argument of its linear holonomy are equal modulo $2\pi$.\newline

The neighborhood of such a conical singularity is constructed starting from an infinite cone of angle $\Theta(\gamma)$ and a ray starting from the origin of the cone. Then, we identify the right part of the ray with the left part of the ray with a homothety ratio of $|\rho(\gamma)|$. Thus, we get the local model of a conical singularity with the adequate invariants.\newline

The conical singularities satisfy an analog of Gauss-Bonnet formula, proved as Proposition 1 in \cite{G2} in the context of dilation surfaces but the proof is the same for branched affine surfaces. We draw a geometric triangulation whose set of vertices includes the conical singularities. Euler characteristic and the fact that the sum of the angles of any affine triangles is $\pi$ proves the first formula. The second formula is just the computation of the linear holonomy of some loop whose homology is trivial.

\begin{prop}
In a branched affine surface of genus $g$ with conical singularities $A_{1},\dots,A_{n}$ of angle $\theta_{i}$ and dilation ratio $\lambda_{i}=\mid \rho(\gamma_{i})\mid$, we have:\newline
(i) $\sum_{i=1}^{n} (\theta_{i}-2\pi) = 2\pi(2g-2)$;\newline
(ii) $\sum_{i=1}^{n} log(\lambda_{i}) = 0$.\newline
\end{prop}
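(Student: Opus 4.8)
The plan is to treat the two identities separately: (i) by an Euler-characteristic count on a geodesic triangulation, and (ii) by the vanishing of a natural homology class.

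For (i), the enabling observation is that the transition maps $x \mapsto ax+b$ with $a \in \mathbb{C}^*$ are holomorphic, hence conformal, so angles are intrinsically well-defined and a geodesic (straight-sided) triangle, read in a chart, is an ordinary Euclidean triangle with angle sum $\pi$. I would fix a geodesic triangulation $T$ whose vertex set contains $A_{1},\dots,A_{n}$, and write $v,e,f$ for its numbers of vertices, edges and faces. Double counting incidences gives $3f = 2e$, while Euler's formula gives $v - e + f = 2 - 2g$; eliminating $e$ yields $f = 2v - 2(2-2g)$. I would then compute the total angle over all faces in two ways. Summing over faces gives $\pi f$; summing the angles around each vertex gives $2\pi(v-n) + \sum_{i=1}^{n}\theta_{i}$, since each regular vertex carries cone angle $2\pi$ and each singular vertex $A_{i}$ carries $\theta_{i}$. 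Equating the two and substituting the value of $f$, the terms $2\pi v$ cancel and one is left with $\sum_{i=1}^{n}(\theta_{i}-2\pi) = 2\pi(2g-2)$.

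For (ii), I would exploit that $\log\lvert\rho\rvert$ is additive and that the loops around the punctures satisfy a single homological relation. Concretely, in $H_{1}(X\setminus\Lambda,\mathbb{Z})$ of a genus $g$ surface with $n$ punctures one has $\sum_{i=1}^{n}\gamma_{i}=0$, since the product of the boundary loops equals a product of commutators and is therefore trivial in the abelianization. As $\rho\colon H_{1}(X\setminus\Lambda,\mathbb{Z})\to \mathbb{C}^{*}$ is a morphism and $\log\lvert\cdot\rvert\colon \mathbb{C}^{*}\to\mathbb{R}$ is a morphism, the composite $\log\lvert\rho\rvert$ is additive, whence $\sum_{i=1}^{n}\log\lambda_{i} = \sum_{i=1}^{n}\log\lvert\rho(\gamma_{i})\rvert = \log\bigl\lvert\rho\bigl(\sum_{i}\gamma_{i}\bigr)\bigr\rvert = \log\lvert\rho(0)\rvert = 0$. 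This is exactly the linear holonomy of a loop encircling all the singularities, which is null-homologous.

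The step I expect to be delicate is the very existence, in (i), of a geodesic triangulation with the singularities among its vertices: this is precisely the content of Theorem 1.1, and it fails exactly when $X$ contains a hyperbolic cylinder of angle at least $\pi$. For triangulable $X$ the argument above is complete, and inserting auxiliary regular vertices is harmless because they contribute $2\pi$ and leave the final count unchanged. For the general case I would either argue that (i) depends only on the underlying conformal structure with its cone angles, independent of the dilation data, and so can be read off once the scaling is forgotten, or else accommodate the cylinders by subdividing them and checking that their net angular contribution vanishes. By contrast, the homology argument for (ii) requires no triangulation and therefore applies verbatim to every branched affine surface.
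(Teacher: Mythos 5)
Your proposal is correct and takes essentially the same route as the paper, which likewise proves (i) by drawing a geometric triangulation containing the singularities as vertices and combining the Euler characteristic with the angle sum $\pi$ of affine triangles, and (ii) by computing the linear holonomy of a null-homologous loop (your relation $\sum_i \gamma_i = 0$ in $H_1(X\setminus\Lambda,\mathbb{Z})$), following Proposition 1 of \cite{G2}. Your added care about triangulability --- subdividing hyperbolic cylinders and noting that auxiliary regular vertices contribute $2\pi$ and cancel from the count --- is a sound completion of a point the paper passes over silently.
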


Proposition 2.1 implies in particular that any branched affine surface of genus zero has at least three singularities.\newline

Path with a locally constant direction are (geodesic) \textit{trajectories}. They are straight lines.\newline

Though we do not have a globally defined notion of length, there is natural distinction between finite and infinite trajectories. A trajectory is \textit{critical} if it hits a conical singularity. Some trajectories may be self-intersecting.

\begin{defn} A \textit{saddle connection} is a geodesic segment (without self-intersection outside the ends) whose ends are conical singularities.
\end{defn}

Another interesting class of trajectories is formed by closed geodesics.

\begin{defn} A \textit{closed geodesic} $\gamma$ is a locally straight simple loop. Depending on their linear holonomy, they can be flat if $|\rho(\gamma)|=1$ or hyperbolic otherwise.
\end{defn}

Since there is no self-intersection, the linear holonomy along a closed geodesic is real. Directions are preserved. Continuation shows clearly that closed geodesics describe cylinders bounded by saddle connections. Flat cylinders are just parallelograms with a pair of opposed sides that are identified. Hyperbolic cylinders (filled by closed geodesics with nontrivial linear holonomy) are portions of annuli where the two concentric arcs of circle are identified. Their important invariant is the total angle of the portion of annulus (it can be given by the index of a path connecting the two boundaries of the cylinder), see \cite{G2} for a detailed presentation.\newline

Dynamics of branched affine structure can be complicated. Some infinite trajectories may accumulate on Cantor sets. However, we will only use very basic notion of dynamics. In particular, in hyperbolic cylinders, if an entering trajectory belongs to the same direction as some closed geodesics of the cylinder, then it will accumulate on one of them (limit-cycle). Indeed, since they have the same direction, they cannot cross each other and the trajectory cannot "turn back" and leave the cylinder by the side through which it entered.

The following lemma is a refinement of Lemma 7 of \cite{DFG}. A version concerning affine immersion of half-planes is crucial in the proof of Veech theorem on triangulations of dilation surfaces.

\begin{lem}
For a branched affine surface $X$ without self-intersecting trajectories, the image of any affine immersion $f$ of an open cone $C$ of angle $\theta$ in $X$ contains a hyperbolic closed geodesic that belongs to a hyperbolic cylinder of angle at least $\theta$. Every ray of the cone accumulates on some closed geodesic of the cylinder.
\end{lem}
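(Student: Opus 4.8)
The plan is to study the affine immersion $f:C\to X$ of the open cone $C$ of angle $\theta$ and exploit the hypothesis that $X$ has no self-intersecting trajectories. First I would set up coordinates on the cone: write $C$ in polar-type affine coordinates so that the apex is at the origin and the two boundary rays make angle $\theta$, and consider the family of radial rays $r\mapsto f(r\e^{i\phi})$ for $\phi$ ranging over the angular interval of width $\theta$. Each such ray is a trajectory in $X$ with a well-defined direction, and the key structural fact I would want is that these rays, together with the arcs of constant radius, give a genuinely immersed fan in $X$.

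The heart of the argument is a limit-cycle / accumulation analysis. I would fix one radial ray $\ell$ in $C$ and follow its image under $f$; since $X$ is compact and the trajectory is infinite (the cone is unbounded in the radial direction, or at least the immersion extends the ray arbitrarily far), the image ray must accumulate somewhere. The absence of self-intersecting trajectories is exactly the hypothesis that lets me invoke the dynamical principle quoted just before the lemma: a trajectory cannot turn back and cross itself, so its $\omega$-limit behaviour is forced to be a closed geodesic. I would argue that the accumulation set of a radial ray contains a closed geodesic $\gamma$, and that because the immersed cone has a genuine angular spread $\theta$, this $\gamma$ must be \emph{hyperbolic}: if it were flat, the returning trajectories would have to close up with trivial linear holonomy, which is incompatible with the expanding/contracting radial structure of the cone of positive angle. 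The hyperbolicity should fall out of comparing the linear holonomy accumulated along the immersed radial direction with $|\rho(\gamma)|\ne 1$.

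Next I would extract the cylinder and control its angle. Once one hyperbolic closed geodesic $\gamma$ is found in the image, continuation of $\gamma$ in its own direction sweeps out a hyperbolic cylinder (as explained in the paragraph on closed geodesics), and its invariant is the total angle of the swept annulus. To see that this angle is at least $\theta$, I would use that the whole angular sector of the cone, of opening $\theta$, maps into $X$ by an immersion whose radial rays all accumulate on geodesics of the \emph{same} cylinder: the index of a path crossing the image of the cone from one boundary ray to the other is exactly $\theta$ (by definition of the index $\Theta$ as the integral of $\theta'$), and since the immersion injects this angular data into the cylinder, the total angle of the annulus is bounded below by $\theta$. This simultaneously yields the final sentence, namely that \emph{every} ray of the cone accumulates on a closed geodesic of the cylinder, since the rays foliate the angular sector and each one is trapped by the limit-cycle mechanism.

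The main obstacle I expect is the rigorous passage from ``the image ray accumulates'' to ``it accumulates on a genuine closed geodesic of a hyperbolic cylinder of angle at least $\theta$,'' and in particular ensuring uniformity across all rays of the cone rather than for a single ray. The delicate point is ruling out pathological accumulation (the paper itself warns that infinite trajectories can accumulate on Cantor sets): I must use the no-self-intersection hypothesis and the constant-direction property of each radial ray to force the limit set to be a single closed geodesic rather than a more complicated minimal set, and then argue that the angular measure $\theta$ of the source cone is entirely absorbed into one hyperbolic cylinder. Quantifying the angle of the cylinder by a holonomy/index comparison, and checking that distinct rays of the cone do not escape into different cylinders, is the technically most demanding step and is presumably where the refinement of Lemma 7 of \cite{DFG} over the half-plane case is really needed.
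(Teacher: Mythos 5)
There is a genuine gap, and it sits at the very first step. You pass from ``the image of a radial ray accumulates somewhere'' to ``its $\omega$-limit is a closed geodesic'' by invoking the dynamical principle quoted before the lemma, but that principle only applies to a trajectory \emph{already known} to enter a hyperbolic cylinder in a direction parallel to its closed geodesics --- using it to produce the cylinder in the first place is circular. Indeed, the pathology you flag at the end (accumulation on Cantor-like sets) is exactly what your argument fails to exclude, and the no-self-intersection hypothesis alone does not exclude it. The paper's proof finds the closed geodesic by an entirely different, essentially static mechanism that your proposal never uses: at an accumulation point $x$ of the image ray, one takes a small affine disk $D$ around $x$; the ray crosses $D$ in infinitely many nearly parallel segments, and since translates of the cone along the ray remain inside $C$, there are overlapping immersed subcones based at these segments, forcing $f$ to be non-injective. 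Convexity of $C$ then does the real work: for $a \neq b$ in $C$ with $f(a)=f(b)$, the straight segment $[a;b]$ lies in $C$ and its image is a closed geodesic (simple, by the hypothesis on trajectories). Without this non-injectivity-plus-convexity step, or a substitute for it, the proof does not get off the ground.

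The two later steps also have gaps where the paper has concrete arguments. For hyperbolicity, your heuristic that a flat geodesic is ``incompatible with the expanding/contracting radial structure of the cone'' is not a proof --- nothing a priori prevents the identification $f(a)=f(b)$ from having trivial linear part. The paper rules out the flat case by noting that a flat cylinder could be continued indefinitely inside the image of the cone (which contains no singularities and is affinely unbounded), contradicting compactness of $X$, and excludes the torus alternative because $\Lambda \neq \emptyset$. For the angle bound, your index computation presupposes precisely what must be shown, namely that the angular data of the whole cone is ``injected into'' a single cylinder; you acknowledge this but offer no mechanism. The paper's mechanism is: trajectories issued from the closed geodesic in directions covered by $C$ stay in $f(C)$, hence are infinite and can never hit the boundary singularities of the cylinder, so the interval of closed-geodesic directions must contain the full angular interval of $C$, giving angle at least $\theta$; then preimages of these closed geodesics cross both sides of $C$, so every ray eventually enters the cylinder, and it must accumulate on the parallel closed geodesic of its family since the only alternative is hitting a singularity, impossible inside $f(C)$. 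In short, you correctly identified the hard points, but the proposal resolves none of them, and the key idea of the paper --- extracting the closed geodesic from non-injectivity over a convex cone rather than from limit dynamics of a single ray --- is missing.
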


\begin{proof}
The image of $C$ only contains regular points. We first show that there is a closed geodesic in the image of the cone. We consider the image $\gamma$ of a ray of the cone. Either it is a closed or it is infinite (we excluded self-intersections). In the latter case, we consider $x$ one of its accumulation points. Let $D$ be a small disk around $x$ for the affine structure of X. The intersection of $\gamma$ with $D$ is formed by infinitely many segments accumulating on at least one diameter of the disk. The translations of cone $C$ along $\gamma$ remain inside $C$ so there are cones starting from any point of $\gamma \cap D$ in $f(C)$. These small cones overlap and the immersion is clearly noninjective. The cone is convex so for any pair of points $a,b \in C$ such that $f(a)=f(b)$, segment $[a;b]$ is a closed geodesic (there is no self-intersection). The procedure we used provides closed geodesics in directions that are arbitrarily close to that of $\gamma$. This means in particular that the straight line continuing $[a;b]$ in $C$ cuts it into two infinite components.\newline
The continuation of any such closed geodesic $\alpha$ is a a cylinder bounded by singular points. If the closed geodesic is flat (homothety ratio equal to $1$), then the cylinder can be continuated inside the image of the cone to infinity. Since the surface is compact, there is no infinite flat cylinder. Besides, it cannot close up and form a torus because it would be a connected component without boundary and it should contain at least one conical singularity. Therefore, $\alpha$ is hyperbolic and its cylinder is a part of an annulus. We consider trajectories starting from a point of $\alpha$. For an open interval of directions around that of $\alpha$, trajectories accumulate on closed geodesics of the same cylinder. In particular, they cannot hit a singularity. For a sequence of directions converging to one of the two ends of the interval, the trajectory hits a singularity of the boundary of the cylinder. Since trajectories starting from $\alpha$ in directions covered by cone $C$ are infinite, the directions of closed geodesics of the cylinder form an interval of angle at least $\theta$. Preimages of the closed geodesics of the cylinder intersect the two sides of cone C. Therefore, any ray of $C$ eventually enters in the cylinder. For any ray, there is a continuous family of parallel trajectories, one of which being a closed geodesic of the cylinder. Every other trajectory of this family accumulates on this geodesic. Otherwise it would hit a singularity (and there is no conical singularity in the image of the cone). This ends the proof.
\end{proof}

\subsection{Geometric triangulations}

In the proofs that follows, we will need a notion of branched affine surface with boundary.

\begin{defn} A \textit{branched affine surface with boundary} is a topological surface $X$ with boundary with at least one marked point on each boundary component and branched affine structure in its interior such that any boundary arc is a saddle connection. We define the \textit{singular locus} $Sing(X)$ of $X$ as the union of the conical singularities and the boundary.
\end{defn}

The geometric triangulation we are interested in are a special class of topological triangulations.

\begin{defn} A topological triangulation of a topological surface with marked points and boundary (possibly empty) is a maximal family of topological arcs connecting marked points an such that they do not intersect themselves or each other in their interior. The arcs cuts out the surface into ideal triangles (vertices and edges may be not distinct). Ideal triangles can be self-folded (two sides are identified).
\end{defn}

The definition of a geometric triangulation in the framework of branched affine structures is straightforward.

\begin{defn} For a given branched affine surface, a geometric triangulation is a topological triangulation whose edges (including the boundary) are saddle connections and whose vertices are conical singularities (every conical singularity should be a vertex of the triangulation).
\end{defn}

We should recall that there are no bigons in flat surfaces (since there are simply connected, they would be translation surfaces and there is no such object). As a consequence, there is at most one saddle connection in each isotopy class of topological arcs of the surface punctured at the conical singularities.

Self-folded triangles often appear in triangulations of branched affine surfaces. They characterize conical singularities with an angle smaller than $\pi$.

\begin{lem}
Every conical singularity with an angle smaller than $\pi$ is the distinguished vertex of a self-folded triangle (with possibly several singularities in the opposite side). Conversely, the distinguished vertex of any self-folded triangle is a conical singularity of angle smaller than $\pi$.
\end{lem}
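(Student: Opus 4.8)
The plan is to treat the two implications separately, since the converse is essentially a triviality while the direct implication carries all of the geometric content. For the converse, suppose $T$ is a self-folded triangle and let $v$ be its distinguished vertex, i.e. the vertex enclosed by the loop and joined to the loop's base point by the folded (radius) edge. By definition of self-folding, the two sides of the underlying affine triangle emanating from $v$ are glued to one another, so the entire cone at $v$ is swept out by the single interior angle of the triangle at $v$. The loop separates $v$ from the rest of the surface, so inside it there is nothing but $v$ joined to the boundary by the radius, and no further triangle contributes angle at $v$. Since the three angles of any affine triangle sum to $\pi$ and each is positive, this interior angle is strictly less than $\pi$; hence the cone angle at $v$ equals it and is $<\pi$.

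For the direct implication, fix a conical singularity $v$ of angle $\theta<\pi$. The point of the hypothesis is convexity. Developing a neighbourhood of $v$ into $\C$ (angles are preserved, the transition maps $z\mapsto az+b$ being conformal) realises the cone as a Euclidean wedge $W$ of angle $\theta$ with apex at the origin, whose two bounding rays $\ell^{-},\ell^{+}$ are identified by the linear holonomy $h\colon z\mapsto \lambda_{v}e^{i\theta}z$. Because $\theta<\pi$ the wedge $W$ is convex, so for any $a\in\ell^{-}$ the Euclidean segment $[a,h(a)]$ joining $a$ to its partner $h(a)\in\ell^{+}$ stays inside $W$. In $X$ the points $a$ and $h(a)$ coincide in a single point $w$, so this segment descends to a loop based at $w$ that encloses $v$; together with the two identified sub-rays $[v,a]$ and $[v,h(a)]$ it bounds a teardrop region which is exactly a self-folded triangle with distinguished vertex $v$, provided $w$ is a singularity and the teardrop develops injectively.

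It remains to produce the radius. Cutting the cone along a geodesic ray from $v$ turns that ray into the seam $\ell^{-}\sim\ell^{+}$, so it suffices to find one geodesic ray issuing from $v$ that meets a singularity. I would grow the teardrop continuously, letting $|a|$ increase from $0$: for small values the development is injective and the closed region contains no singularity other than $v$. Let $s^{*}$ be the first parameter at which a singularity $q$ enters the closed region. At $s^{*}$ the straight segment $[v,q]$ lies in the embedded region and meets no singularity before $q$, hence is a saddle connection. Recutting the cone along $[v,q]$ makes it the radius; the side opposite to $v$ is then the chord joining the two developed copies of $q$, which by convexity stays inside the wedge and descends to a geodesic loop based at $q$. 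This loop may run through further singularities before closing up, which is precisely the phenomenon recorded by the parenthetical ``with possibly several singularities in the opposite side''.

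The hard part is the termination step: ruling out that every geodesic emanating from $v$ stays embedded and infinite, so that $s^{*}$ is genuinely finite, and more generally controlling the injectivity of the developing map as the teardrop grows. Convexity ($\theta<\pi$) is what guarantees that the base chord always closes the region around $v$ and never leaves the wedge, but it does not by itself force a singularity to appear; for that one needs the compactness of $X$ together with a continuation argument of the type used to prove Lemma~2.4, since an embedded affine copy of an unbounded convex cone cannot sit inside a compact surface. Once a singularity is captured, reading off the self-folded triangle is routine, and the two implications together give the stated characterisation.
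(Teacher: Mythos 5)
Your overall strategy is the paper's own: grow a one-parameter family of concentric loops around the small-angle singularity (your teardrops are exactly the paper's broken geodesics with breaking point on a chosen critical ray, interior angle $\pi-\theta$ at the break), stop when a singularity is captured, recut along the resulting saddle connection $[v,q]$, and get the converse from the fact that the angles of an affine triangle sum to $\pi$. The converse half of your argument is complete and correct, and the paper does not even spell it out.

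But the step you yourself flag as ``the hard part'' is a genuine gap, and it is precisely where the paper does its work. Your definition of $s^{*}$ as the first parameter at which a singularity enters the closed region presupposes that the growing loop stays embedded until a singularity appears; a priori the family could instead terminate by the boundary loop touching itself. The paper excludes this with a dedicated minimality argument: the first self-intersection in the family would have to be a self-tangency between geodesic portions of the loop, but the exterior angle $\pi+\theta>\pi$ at the breaking point then forces a transverse crossing strictly earlier in the family, contradicting minimality. You mention ``controlling the injectivity of the developing map'' but supply no mechanism. Likewise, your appeal to Lemma~2.4 for termination does not work off the shelf: that lemma is stated for surfaces \emph{without self-intersecting trajectories}, a hypothesis unavailable here, and its conclusion concerns immersions of cones (whose images contain hyperbolic cylinders), so your slogan that ``an embedded affine copy of an unbounded convex cone cannot sit inside a compact surface'' is itself something to be proved by the accumulation-and-overlap argument, not cited. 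Finally, the recutting step is glossed: after recutting along $[v,q]$, the chord joining the two developed copies of $q$ is \emph{not} contained in the teardrop you have shown to be embedded, so one cannot simply assert it descends to a geodesic loop based at $q$; the paper instead regrows an entire new family of broken geodesics with breaking points on the saddle connection $[AB]$ and reruns the same self-tangency exclusion, and it is this second pass that produces the self-folded triangle with singular vertices and accounts for the possible extra singularities on the opposite side.
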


\begin{proof}
For any critical trajectory $\gamma$ starting from the conical singularity $A$ of angle $\theta < \pi$, we can locally draw a family of concentric broken geodesics whose breaking point is on $\gamma$. At any breaking point, the interior angle is $\pi-\theta$ and the exterior angle is $\pi+\theta$. Every broken geodesic defines with the corresponding portion of $\gamma$ a self-folded triangle. The maximal family of broken geodesics is finite because the underlying surface is compact. There are two possibilities. Either the family of broken geodesics hits another conical singularity or get some self-intersection.\newline
We prove that this second case cannot happen. We consider the first broken geodesic family for which this self-intersection can happen. If it involves regular points, then by minimality it is a self-tangency between entire portions of the curve. Since the exterior angle at the breaking point at $\pi+\theta$, there is necessarily some self-intersection between the breaking point and one other point of the curve. This contradicts minimality. Therefore, the maximal family of broken geodesics is bounded by singular curve formed by geodesics segments with a breaking point and at least one conical singularity $B$. Since the family of broken geodesics defines a self-folded triangle, there is inside it a saddle connection $[AB]$ that we can take as the locus of breaking points of a new family of broken geodesics. Processing the same reasoning, we get a new self-folded triangle whose vertices are this time conical singularities.
\end{proof}

\begin{rem} The self-folded triangle obtained for a conical singularity in Lemma 2.8 is by no means unique. It clearly depends on the choice of the saddle connections to which the breaking points of the broken geodesics belong to.
\end{rem}

\begin{lem}
In any branched affine surface with boundary that is not a triangle or a hyperbolic cylinder of angle at least $\pi$, there is at least one interior saddle connection.
\end{lem}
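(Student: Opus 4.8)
The plan is to prove the contrapositive: assuming $X$ has no interior saddle connection, I would show that $X$ is an affine triangle or a hyperbolic cylinder of angle at least $\pi$. Since $\partial X$ consists of saddle connections, $X$ carries at least one conical singularity $A$; fix one and consider the family of geodesic rays leaving $A$ into the interior, which near $A$ fill an open cone whose angle is the angle of $X$ at $A$. Following such a ray until it first meets $Sing(X)$, if the first meeting is a conical singularity $B$, then the segment $[A,B]$ meets the singular locus only at its endpoints and is a candidate interior saddle connection; the one thing to check is embeddedness, and if the segment self-intersects, its first self-crossing produces an embedded interior closed geodesic, exactly as in the proof of Lemma 2.4, placing us in the cylinder case below. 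Hence I may assume that no interior ray issued from a singularity reaches a singularity, which reduces the problem to the dichotomy that follows.

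Suppose first that the interior of $X$ contains a closed geodesic $\alpha$; by Lemma 2.4 this is automatic as soon as an open subcone of rays at $A$ stays in the interior forever. The continuation of $\alpha$ is a cylinder $\mathcal{C} \subseteq X$ bounded by saddle connections. If some boundary component of $\mathcal{C}$ is not contained in $\partial X$, it is an interior saddle connection and we are done; otherwise $\partial \mathcal{C} \subseteq \partial X$, and connectedness of $X$ forces $\mathcal{C} = X$. A flat cylinder is a parallelogram with a pair of opposite sides identified and thus has a diagonal realizing an interior saddle connection, so this case is excluded. Hence $X$ is a hyperbolic cylinder, and I would finish using the annulus model recalled in Section 2: developing $\mathcal{C}$ as a sector of total angle $\theta_{\mathcal{C}}$, a geodesic cross-cut joining a marked point of one boundary circle to a marked point of the other is embedded precisely when $\theta_{\mathcal{C}} < \pi$. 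Thus either $\theta_{\mathcal{C}} < \pi$ and this cross-cut is an interior saddle connection, or $\theta_{\mathcal{C}} \geq \pi$ and $X$ is one of the allowed exceptions.

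It remains to treat the case where the interior contains no closed geodesic. Then no open cone of rays at $A$ can be infinite (Lemma 2.4 would otherwise produce a closed geodesic), and no immersed cone at $A$ can be non-injective (a coincidence $f(a)=f(b)$ would again yield an interior closed geodesic through the segment $[a,b]$). Consequently the development of the cone at $A$ is an embedded sector, and every ray reaches $\partial X$ transversally at an interior point of a boundary arc, with the foot depending continuously on the direction. As the direction sweeps from the boundary arc $e_1$ at $A$ to the other boundary arc $e_2$ at $A$, the foot moves continuously along $\partial X$ from the far endpoint of $e_1$ to the far endpoint of $e_2$ and, by the reduction of the first paragraph, never passes through a boundary vertex; hence all feet lie on a single boundary arc $e_3$. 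Therefore $\partial X = e_1 \cup e_2 \cup e_3$, and since any interior singularity would be the foot of some ray, $X$ carries none and is an affine triangle.

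The main obstacle is the passage, used twice above, from an infinite interior trajectory to an immersed open cone to which Lemma 2.4 applies: the affine dynamics permit infinite trajectories whose admissible directions form a Cantor-like set with empty interior, so a lone infinite ray need not lie inside a cone of infinite rays, and the foot map of the third paragraph could then be discontinuous. I expect this to be the delicate point, to be handled by upgrading a single infinite interior trajectory to a genuine immersed cone near one of its accumulation points, reproducing on the accumulating parallel passes the overlap argument of Lemma 2.4, so that an interior closed geodesic is produced in every configuration where not all rays reach the boundary. The remaining quantitative input, that the embedded cross-cut of a hyperbolic cylinder exists exactly below angle $\pi$, is a direct computation in the annulus model.
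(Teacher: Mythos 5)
The central gap is your first-paragraph dismissal of self-intersecting trajectories. You claim that if the segment from $A$ self-intersects, "its first self-crossing produces an embedded interior closed geodesic, exactly as in the proof of Lemma 2.4." This is false: the closed geodesics in Lemma 2.4 arise from noninjectivity of an affine immersion of a two-dimensional \emph{convex cone}, where $f(a)=f(b)$ forces the segment $[a;b]$ to close up smoothly because the immersion identifies the directions at $a$ and $b$. A single geodesic ray crossing itself produces only a geodesic loop with a corner at the crossing --- a \emph{broken geodesic} in the paper's terminology --- which is not a closed geodesic and spans no cylinder. Handling precisely these broken geodesics occupies most of the paper's proof: it runs a minimal-counterexample induction (lexicographic in genus, number of components of the singular locus, number of boundary saddle connections), uses the induced triangulation and an angle count at the regular turning point to show every broken geodesic is homotopic to a component of the singular locus, and in the residual case constructs a finite family of concentric broken geodesics forming a topological cylinder whose far boundary has interior angle $2\pi-\theta>\pi$ at the breaking point, contradicting that all singularities in that case have angle strictly less than $\pi$. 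None of this machinery is recoverable from your sketch, so the reduction "assume no ray from a singularity self-intersects" is unjustified and the contrapositive argument collapses at its first step.

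The second gap you flag honestly, but your proposed repair does not work. Upgrading a lone infinite trajectory to an immersed cone near an accumulation point is exactly what the overlap argument of Lemma 2.4 cannot deliver: that argument translates an \emph{open cone} of directions along the trajectory, and with a single ray you have no such cone to translate; two nearly parallel passes through a small disk give only a piecewise-affine return map with no guaranteed fixed point, and trajectories of affine surfaces can indeed accumulate on Cantor-like sets. The paper sidesteps this entirely: having shown that every finite trajectory from a corner cuts off a polygon in which all trajectories are again finite, it deduces that each corner admits at most one infinite trajectory, and then kills it by openness --- the set of points accessible by rays from the corner is open, whereas a union of two triangles and one infinite ray is not. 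Note also that Lemma 2.4 is stated for surfaces \emph{without} self-intersecting trajectories, so even your legitimate appeals to it presuppose the self-intersection analysis you skipped. Your second paragraph (closed geodesic $\Rightarrow$ cylinder $\Rightarrow$ flat cylinder has a diagonal, hyperbolic cylinder has an embedded cross-cut exactly when its angle is less than $\pi$) does match the paper's opening reduction; the two genuinely hard cases are the ones your proposal leaves open.
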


\begin{proof}
We assume by contradiction there is a counterexample $X$ of minimal genus, number of connected components of the singular locus (union of the singular points and the boundary saddle connections) and number of boundary saddle connections (in this lexicographic order). Since polygons are also translation surfaces, we already know they are triangulated, see \cite{Ta}. Besides, cylinders are bounded by saddle connections so we can assume there is no closed geodesic in our counterexample. In the case where the whole surface is a cylinder and its boundary is the boundary of the surface, there are two cases. If it is a flat cylinder or a hyperbolic cylinder of angle smaller than $\pi$, then it can be built as a trapezoid and has clearly a diagonal. It can be triangulated. If it is a hyperbolic cylinder of angle at least $\pi$, there is no straight line between the two ends of the cylinder because it always has the same direction of a closed geodesic of the cylinder and thus cannot cross it (the linear holonomy of the main loops of the cylinder is real so directions are well-defined). There is no closed geodesic in the minimal counterexample.
\newline
Then, we prove that there is no broken geodesic that is not homotopic to a component of the singular locus. A broken geodesic $\gamma$ is formed by a simple closed geodesic segment whose both ends are a turning point $A$. Clearly, it cannot be homotopically trivial. Therefore, $X \setminus \gamma$ has a geometric triangulation. It induces a geometric triangulation of $X$ where $A$ is a vertex and $\gamma$ is an edge. Point $A$ is a regular point so the total angle of its corners is $2\pi$. We consider the quadrilateral formed by the two triangles around an edge whose two ends are $A$ ($\gamma$ for example). This edge $\alpha$ is a diagonal of this quadrilateral. If it is convex, then at least one of the two other vertices outside the ends of $\alpha$ is $A$ because otherwise the other diagonal would be an interior saddle connection. Therefore, either one of the two triangles has $A$ as its three vertices or the quadrilateral is nonconvex. In both cases, the total angle of $A$ in this quadrilateral is strictly bigger than $\pi$.\newline
Consequently, if there are two edges whose both edges are $A$, the quadrilaterals they form share one triangle $T$ whose three vertices are $A$. There are also three triangles where two vertices are $A$ (the neighbors of this triangle). The other triangles have only one vertex that is $A$. There is no triangle whose three edges do not meet $A$ because its edges would be saddle connection and the surface would be just that triangle. Clearly, the surface is of genus zero with a singular locus of three connected components (one in each connected component of the complement of triangle $T$ in $X$). Loop $\gamma$ is thus homotopic to a connected component of the singular locus.\newline
If there is only one edge whose both ends are $A$, then the quadrilateral around it is nonconvex and its two triangles are the only one with two vertices that are $A$. For the others, $A$ is just one among three vertices. The surface is thus a sphere with a singular locus of two connected components (one in each of connected component of the complement of $\gamma$ in $X$). Consequently, in our minimal counterexample $X$, every broken geodesic is homotopic to a connected component of the singular locus.\newline
In fact, we can even prove that for a component $C$ of the singular locus, for any critical trajectory $\beta$ starting from $C$ that intersects itself forming a broken geodesic $\gamma$, this loop $\gamma$ is homotopic to $C$. Otherwise, we would cut along $\beta$ and $\gamma$. We would get a surface with strictly less connected component of the singular locus. The reconstruction of a true triangulation on the surface would be made as previously. The only case that is left for a critical trajectory with self-intersection is if it homotopic to its starting connected component that is formed by at most one saddle connection (or just one conical singularity). Otherwise, this process would provide a surface with strictly fewer boundary saddle connections.\newline

Then, we prove that there is no finite critical trajectory (without self-intersection) that is not homotopic to a portion of a connected component of the singular locus. Finite critical trajectories are just segments that cross a boundary saddle connection. If there is such trajectory $\alpha$ that crosses a boundary saddle connection in a regular point $A$, we can proceed as previously and get a geometric triangulation with $A$ as a vertex and $\alpha$ as an edge. The sum of the angle of the corners of $A$ is $\pi$. Thus, there is no triangle whose three vertices are $A$. If there is an interior edge such that both ends are $A$, then its a diagonal of a quadrilateral such that the two other vertices are the true conical singularities of $X$. They cannot be joined by another diagonal because it would be an interior saddle connection. This implies that the quadrilateral is nonconvex and the sum of the angles of the two corners corresponding to $\pi$ is strictly bigger than $\pi$. Therefore, $A$ appears as a vertex one time in every triangle (it is always a vertex of any triangle because otherwise there would be interior saddle connections). The opposite side of $A$ in every triangle is a boundary saddle connection. This implies that $X$ is fact a polygon and we already know that they are triangulable (Main Theorem of \cite{Ta}).\newline

We then consider two cases for each connected component of the singular locus depending on the existence of a self-intersecting critical trajectory. If there is no such trajectory, every critical trajectory is either finite (crosses a boundary saddle connection) or infinite (without self-intersection). Lemma 2.3 proves that existence of any cone of infinite trajectories implies existence of a closed geodesic. Therefore, for any corner, directions of infinite trajectories is a nowhere dense set whereas directions of finite trajectories are an open set. Any finite trajectory from a corner $A$ is homotopic to a portion of the connected component of the singular locus so it cuts out the surface into two components one of which is a polygon. Every trajectory starting from $A$ and beginning in this polygon is also finite (otherwise it crosses the first trajectory and there is a bigon, which is a contradiction for branched affine surfaces). Therefore, in every corner, there is at most one infinite trajectory that separates two pencils of finite trajectories. The points accessible by a ray from a corner are an open set so there it cannot be a union of two triangles and one infinite ray. The infinite trajectory thus cannot exist. Every critical trajectory is finite and ends in the same boundary component (by continuity). The trajectories from the corner are form thus a triangle where the ends of the two neighboring boundary saddle connections belong to the sides. Either they are in the interior of the sides (and there is an interior saddle connection) or they are the two other vertices of the triangle that form the whole surface.\newline
In the last case, every connected component of the singular locus is a singular point or a closed saddle connection. It has a self-intersecting trajectory defining a broken trajectory homotopic to it. If the component is a singular point, then the self-intersecting trajectory cuts out a bigon formed by both sides of the trajectory before the first self-intersection point and then the broken geodesic. Therefore the component is a closed saddle connection. The same construction gives a triangle whose vertices are the conical singularity (for two vertices) and the turning point of the broken geodesic (for the third one). This implies in particular that the angle of the conical singularity $\theta$ is strictly smaller than $\pi$.\newline
Every critical trajectory is either infinite or self-intersecting (no nontrivial finite trajectory). Directions of infinite trajectories are a nowhere dense closed set (because of Lemma 2.3 that proves existence of a closed geodesic in the image of every affine immersion of a cone) whereas directions of self-intersecting trajectories are an open set (we can perturbate slightly). Every critical self-intersecting trajectory draw a triangle and there is a corner in this triangle where every critical trajectory is self-intersecting. Therefore, there is at most one infinite critical trajectory starting from this conical singularity (if there were two of them, there would be a self-intersecting trajectory between them and we would get a contradiction). Each of the two intervals of directions define a continuous family of broken geodesics. These geodesics are disjoint and form a topological cylinder one end of which is the boundary component. This family is clearly finite ($X$ is compact). Just like in the proof of Lemma 2.8, the other boundary is either singular or self-intersecting. In this last case, we would be able to draw some figure-eight shaped curve whose two branches would represent an homology class that is a half of the class of the broken geodesics of the family. This is impossible. Therefore, the boundary is a singular curve formed by geodesic segments, singularities and a breaking point (which may be a singularity). The interior angle at the breaking point is $2\pi-\theta$ and $\pi$ at the other singularities. Since we are in a case where every conical singularity has an angle strictly less than $\pi$, we get the wanted contradiction.
\end{proof}

The immediate consequence of Lemma 2.10 is that cutting along saddle connections, we get smaller and smaller connected components of the surface. At the end of this finite process (the maximal number of nonisotopic nonintersecting topological arcs is finite for topological reasons), irreducible parts are either triangles or hyperbolic cylinders of angle at least $\pi$. This proves Theorem 1.1.\newline

\section{Flips between geometric triangulations}

Any maximal system of saddle connections in a branched affine surface decomposes it into a union of triangles and affine cylinders of angle at least $\pi$. The following lemma proves that long enough cylinders are disjoint so there is no arbitrary choice. The sharp bound $\frac{\pi}{2}$ is not necessary here but could be useful elsewhere.

\begin{lem} In a branched affine surface, hyperbolic cylinders of angle at least $\frac{\pi}{2}$ are disjoint.
\end{lem}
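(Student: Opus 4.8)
The plan is to argue by contradiction: assume two \emph{distinct} hyperbolic cylinders $C_1,C_2$ of angle at least $\pi/2$ have overlapping interiors, and produce a forbidden geodesic. Throughout I identify the \emph{direction set} $I_i$ of $C_i$ with the arc of $\mathbb{R}/\pi\mathbb{Z}$ swept out by the directions of the closed geodesics filling $C_i$: if the angle of $C_i$ is $\alpha_i$ then $I_i$ is an arc of length $\min(\alpha_i,\pi)$, and the two boundary saddle connections of $C_i$ are geodesics whose directions are exactly the two endpoints of $I_i$.

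First I would record a trapping dichotomy for a single closed geodesic. Pick a regular point $p\in \mathrm{int}(C_1)\cap\mathrm{int}(C_2)$ and let $\gamma$ be the closed geodesic of $C_1$ through $p$, of direction $d\in I_1$. If $d\in I_2$, then at $p$ the loop $\gamma$ runs in a leaf-direction of $C_2$; since a closed geodesic cannot spiral, the limit-cycle behaviour recalled before Lemma 2.3 forces $\gamma$ to coincide with the closed geodesic of $C_2$ through $p$. As a closed geodesic lies in a unique cylinder (its maximal continuation), this gives $C_1=C_2$, a contradiction. Hence $d\in I_1\setminus I_2$; in particular $\alpha_2<\pi$ (otherwise $I_2$ is all of $\mathbb{R}/\pi\mathbb{Z}$ and $d\in I_2$ always), and symmetrically $\alpha_1<\pi$.

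Next I would push $\gamma$ across $C_2$. Since $d\notin I_2$, the geodesic $\gamma$ is transverse to the foliation of $C_2$ by its closed geodesics, and $\mathrm{int}(C_2)$ carries no singularity; consequently the sub-arc of $\gamma$ inside $C_2$ cannot cut off a foliated disc and must join the two boundary components of $C_2$, meeting \emph{both} boundary saddle connections of $C_2$. As $\gamma\subset\mathrm{int}(C_1)$, both crossing points are regular points of $\mathrm{int}(C_1)$ lying in the interior of a boundary saddle connection of $C_2$. These two boundary saddle connections have directions equal to the two endpoints $e_-,e_+$ of $I_2$. Now $d$ lies in $I_1$ and in the open complementary arc of $I_2$, while $I_1\cap I_2\neq\emptyset$ (two arcs of length at least $\pi/2$ in a circle of length $\pi$ must meet); since $I_1$ is connected it must therefore contain at least one endpoint, say $e$, of $I_2$. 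Let $\sigma$ be the boundary saddle connection of $C_2$ of direction $e$, which passes through a regular point $q$ in the interior of $\mathrm{int}(C_1)$, so that $q$ lies in the interior of the finite segment $\sigma$. Because $e\in I_1$, inside $\mathrm{int}(C_1)$ the direction-$e$ trajectory through $q$ is parallel to the direction-$e$ closed geodesic of $C_1$ and cannot cross it; by the same limit-cycle behaviour it is trapped in $C_1$ on one side, spiralling onto that closed geodesic and never reaching a singularity while it stays in $C_1$. Extending $\sigma$ from $q$ along this trapped side then remains in $\mathrm{int}(C_1)$ indefinitely and meets no singularity, contradicting the fact that $\sigma$ is a saddle connection of finite length ending at a conical singularity.

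I expect the main obstacle to be the step asserting that a transverse geodesic arc crosses $C_2$ from one boundary to the other (equivalently, that it cannot return to the boundary it entered). This is where the absence of singularities in $\mathrm{int}(C_2)$ and the product structure of the cylinder must be used, and I would justify it either through the Poincar\'e--Hopf obstruction to a nonsingular foliation of a disc transverse to part of its boundary, or concretely in the sector model of $C_2$, where $\alpha_2<\pi$ makes the sector convex and a chord in a non-radial direction visibly joins its two straight edges. The borderline case $\alpha_1=\alpha_2=\pi/2$, in which $I_1$ and $I_2$ meet only at their endpoints, is handled automatically, since the argument requires only a single common \emph{endpoint} direction $e\in I_1\cap\{e_-,e_+\}$, not a common interior direction.
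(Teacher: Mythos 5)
Your proof is correct, and although it opens exactly as the paper's proof does --- pick a regular point $p$ in the overlap of two cylinders of angle at least $\frac{\pi}{2}$ and use the pigeonhole fact that their two direction intervals in $\mathbb{R}/\pi\mathbb{Z}$ must share a direction --- the contradiction you extract at the end is genuinely different. The paper takes a common direction $\theta$ at the common point, runs the trajectory both ways, obtains limit cycles in $C$ and in $D$ that must intersect each other, and concludes that ``the trajectory intersects itself, which is impossible''; that final step is delicate, since the paper itself remarks in Section 2.1 that trajectories in branched affine surfaces may self-intersect, so the impossibility really rests on the trapped-spiral structure and is left implicit. You instead establish a clean dichotomy (the leaf of $C_1$ through $p$ is either a leaf of $C_2$, forcing $C_1=C_2$ by maximality of the continuation of a closed geodesic, or has direction off $\overline{I_2}$, forcing both angles below $\pi$), then push that leaf across $C_2$ using convexity of the sector model of angle less than $\pi$ to produce a regular point $q\in \mathrm{int}(C_1)$ interior to a boundary saddle connection $\sigma$ of $C_2$ whose direction $e$ is an endpoint of $I_2$ lying in $I_1$, and finally contradict finiteness of $\sigma$: the direction-$e$ trajectory from $q$ is trapped in $\mathrm{int}(C_1)$, hence infinite and singularity-free, while $\sigma$ must terminate at a conical singularity at finite time along that very ray. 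What your route buys is a contradiction (a saddle connection forced to be an infinite trapped trajectory) that is easier to make airtight than the paper's self-intersection claim; what the paper's route buys is brevity, and it also handles the common direction uniformly without your intermediate crossing lemma.

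Two small points to tighten. First, your dichotomy as written only excludes $d\in I_2$, i.e.\ leaf directions; to license the later assertion that $d$ lies in the \emph{open} complementary arc (which the chord argument needs, since a chord with direction equal to an endpoint of $I_2$ would be parallel to a boundary ray of the sector), you should also rule out $d\in\{e_-,e_+\}$ by the same mechanism: a trajectory in a boundary direction is likewise trapped, spiralling onto the singular boundary chain, and so cannot be an arc of a closed geodesic. Second, in the borderline case $\alpha_1=\alpha_2=\frac{\pi}{2}$ where $e$ is an endpoint of $I_1$ as well, the trapped ray from $q$ accumulates on the singular boundary chain of $C_1$ rather than on a closed geodesic; the trapping, and hence the contradiction, is unchanged, but the phrase ``spiralling onto that closed geodesic'' should be adjusted accordingly.
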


\begin{proof}
We consider a regular point $x$ that belongs to the interior of two different cylinders $C$ and $D$ of angle at least $\frac{\pi}{2}$. This means that $x$ is the intersection of two closed geodesics of $C$ and $D$. Therefore, any closed geodesic of $C$ intersects any closed geodesic of $D$. Since the angles of the hyperbolic cylinders are at least $\frac{\pi}{2}$, there is a direction $\theta$ around $x$ that is a direction of closed geodesic (or the chain of saddle connections that bounds the cylinder) for both $C$ and $D$. One of the two trajectories starting from $x$ in direction $\theta$ accumulates of a closed loop (possibly singular) of cylinder $C$. This is the same for cylinder $D$. There are two cases, either $C=D$ or the union of the two trajectories has two limit cycles in different cylinders. These two limit cycles intersect each other, therefore, the trajectory starting from $x$ in direction $\theta$ intersects itself, which is impossible. Therefore, there is no point $x$ that belongs to the interior of both cylinders $C$ and $D$.
\end{proof}

\begin{cor}
Any branched affine surface canonically decomposes into a triangulable locus and the union of this hyperbolic cylinders of angle at least $\pi$. The triangulable locus is a branched affine surface with boundary.
\end{cor}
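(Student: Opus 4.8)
The plan is to read the decomposition directly off Lemma 3.1 and Theorem 1.1, so that the work is mostly bookkeeping about what is canonical. First I would make precise the pieces to be removed. Every closed geodesic lies in a unique maximal hyperbolic cylinder, obtained by continuation until both ends hit the singular locus, and its angle is an intrinsic invariant of the surface. Let $\mathcal{C}$ denote the collection of maximal hyperbolic cylinders whose angle is at least $\pi$. Since $\pi \ge \frac{\pi}{2}$, Lemma 3.1 shows that the members of $\mathcal{C}$ have pairwise disjoint interiors, while compactness of $X$ together with the topological bound on the number of pairwise non-isotopic disjoint simple arcs shows that $\mathcal{C}$ is finite. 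Set $B = \bigcup_{C \in \mathcal{C}} C$ and $T = \overline{X \setminus B}$.

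The key point, and the reason Lemma 3.1 is invoked, is \emph{canonicity}. A choice of triangulation, or a choice among mutually overlapping short cylinders, is not intrinsic; but a hyperbolic cylinder of angle at least $\pi$ occupies a region that no other such cylinder can meet, so $B$ is forced to be exactly the union of the members of $\mathcal{C}$, with no arbitrary choice involved. Consequently the splitting $X = T \cup B$ is canonical.

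Next I would check that $T$ is a branched affine surface with boundary in the sense of Definition 2.5. Cutting $X$ along $\partial B = \bigcup_{C} \partial C$ separates the open cylinders from $T$, so $T$ is a topological surface whose boundary is $\bigcup_{C} \partial C$ and whose interior $X \setminus B$ inherits the affine structure of $X$. Each boundary component is a boundary of some $C \in \mathcal{C}$, hence a chain of saddle connections: if such a chain carried no conical singularity it would be a closed geodesic parallel to the core, contradicting maximality of $C$. Thus every boundary arc of $T$ is a saddle connection and every boundary component carries at least one marked point, exactly as required.

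Finally, to justify the name I would verify that $T$ contains no hyperbolic cylinder of angle at least $\pi$, so that Theorem 1.1 makes it triangulable. Let $g$ be a closed geodesic in the interior $X \setminus B$ of $T$, and let $C_X(g)$ be the maximal hyperbolic cylinder of $X$ through $g$. If $C_X(g)$ had angle at least $\pi$ it would belong to $\mathcal{C}$, whence $g \subseteq C_X(g) \subseteq B$, contradicting $g \cap B = \emptyset$; so $C_X(g)$ has angle strictly less than $\pi$. Any hyperbolic cylinder of $T$ through $g$ is contained in $C_X(g)$ and therefore has angle less than $\pi$ as well. Hence $T$ carries no hyperbolic cylinder of angle $\ge \pi$ and is triangulable. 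I expect the one delicate step to be exactly this last one: one must be sure that deleting the members of $\mathcal{C}$ neither leaves a big cylinder inside $T$ nor manufactures a new one along the cut, and both are controlled by the observation that every closed geodesic of $T$ generates a genuinely smaller cylinder of $X$.
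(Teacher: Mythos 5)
Your proposal is correct and follows exactly the route the paper intends: Lemma~3.1 gives disjointness (hence canonicity) of the hyperbolic cylinders of angle at least $\pi$, and the triangulability criterion of Theorem~1.1 (via Lemma~2.10, which is stated for surfaces with boundary) handles the complement. The paper states the corollary without proof, and your write-up simply supplies the bookkeeping---including the correct observation that any closed geodesic in the triangulable locus sits in a maximal cylinder of $X$ of angle strictly less than $\pi$---so there is nothing genuinely different to compare.
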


The main source of difficulty for geometric proofs in branched affine surface is that there is not always a geodesic representative in a given homotopy class of topological arcs (for example, a topological arc between the two boundary components of a hyperbolic cylinder of angle at least $\pi$). However, it works in the triangulated locus of the surface. This will be crucial for generalizing the results of \cite{Ta} in a framework where there is no length.

\begin{lem}
In the triangulated locus of a branched affine surface, every nontrivial topological arc has a geodesic representative (chain of saddle connections or closed geodesics).
\end{lem}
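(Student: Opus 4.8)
The plan is to reduce the statement to the classical existence of length-minimizing geodesics on a \emph{compact flat surface with conical singularities}, by developing a finite ``corridor'' of triangles into $\mathbb{C}$. The guiding point is that, although a branched affine surface carries no global notion of length, its structure is locally Euclidean up to a similarity $z\mapsto az+b$; similarities preserve angles and scale length by the constant $|a|$, so on any simply connected piece the local lengths assemble, up to one irrelevant overall scale, into a genuine flat metric. The absence of hyperbolic cylinders of angle at least $\pi$ in the triangulated locus (Corollary 3.2) is precisely what guarantees, through Theorem 1.1, that this locus admits a geometric triangulation $T$, and it is this finiteness that will stand in for the missing global metric.

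First I would fix such a triangulation $T$ and put the arc $a$, running between singularities $P$ and $Q$, in minimal position with respect to $T$; this is a purely topological normalization and yields a finite, well-defined crossing sequence of edges $e_1,\dots,e_N$ and triangles. From this data I build the corridor $S$ \emph{abstractly}, taking one copy of a triangle for each time $a$ traverses it and gluing consecutive copies along the crossed edge, so that $S$ is a disk equipped with a simplicial map $\pi\colon S\to Y$ (where $Y$ denotes the triangulated locus) that is the identity chart on each triangle, and $a$ lifts to a path $\tilde a$ in $S$ from $P$ to $Q$. Developing $S$ into $\mathbb{C}$ by composing the charts with the transition similarities (legitimate since $S$ is simply connected) and pulling back the Euclidean metric turns $S$ into a compact flat cone surface with boundary, whose cone points are the interior vertices of the strip; these are vertices of $T$, hence singularities of $Y$.

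Next I would apply to $S$ the existence of a length-minimizing representative of a homotopy class rel endpoints on a compact flat cone surface (see \cite{Ta}); since $S$ is a disk, this class is the unique one containing $\tilde a$. The minimizer $a^{*}$ is a concatenation of straight segments that turns only at the interior cone points it is forced around, with angle at least $\pi$ on both sides at every turn. Pushing $a^{*}$ forward by $\pi$ then produces a path in $Y$ each of whose maximal straight segments has both endpoints at singularities and no singularity in its interior, i.e.\ a \emph{chain of saddle connections}. Because $a^{*}\simeq\tilde a$ rel endpoints inside the disk $S$, the image $\pi(a^{*})$ is homotopic to $a$ rel endpoints in $Y$.

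The main obstacle is to upgrade this homotopy to an isotopy, i.e.\ to show $\pi(a^{*})$ is \emph{embedded}: the corridor map $\pi$ may fold distinct copies of a triangle onto one another, so a priori the pushed-forward geodesic could meet itself even though $a^{*}$ is embedded in $S$. Here I would invoke that branched affine surfaces contain no bigons and that each isotopy class of arcs contains at most one saddle connection (both recorded after Definition 2.7): a transverse self-intersection of the geodesic would bound a bigon between two of its sub-arcs, which is impossible, while a tangential overlap would contradict $a$ being a simple arc in minimal position. Hence $\pi(a^{*})$ is simple and, being homotopic to $a$ rel endpoints, is isotopic to it, giving the desired geodesic representative. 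Finally, the case where $a$ is freely homotopic to a loop is treated identically, except that the corridor $S$ is then an annular strip rather than a disk and the length-minimizer in the corresponding free homotopy class may be a closed geodesic, which accounts for the second alternative in the statement.
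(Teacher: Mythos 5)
Your proposal is correct and follows essentially the same route as the paper: the paper lifts the arc to the universal cover and straightens it inside the finite polygon formed by the triangles it crosses (where a length is globally defined), which is precisely your developed corridor, then projects the minimizer back down. Your extra checks---embeddedness of the projection via the no-bigon property, and the annular corridor for the closed-geodesic alternative---only make explicit what the paper leaves implicit; note also that a linear chain of triangle copies glued along consecutive crossed edges has \emph{no} interior vertices, so your remark about interior cone points is vacuous but harmless.
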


\begin{proof}
Every nontrivial topological arc $\gamma$ that belongs to the triangulated locus $T$ of a branched affine surface lifts to a topological arc $\overline{\gamma}$ of the universal cover of $T$ where it passes through finitely many triangles. The union of these affine triangles is a polygon (simply connected surface with a unique boundary component) where a notion of length can be globally defined. Arc $\overline{\gamma}$ has a geodesic representative whose projection on $T$ is a geodesic representative of $\gamma$.
\end{proof}

In the following, we consider only triangulable branched affine surfaces. We study the relations between geometric triangulations of the same surface. For this purpose, we introduce the flip as an elementary move that changes only one edge.

\begin{defn} A flip is a transformation of a topological triangulation that removes an edge that is a diagonal of a quadrilateral and replaces it by the other diagonal. A flip between two geometric triangulations of a branched affine structure exists only if the quadrilateral is convex.
\end{defn}

The following theorem proves that the flip graph of geometric triangulations of a branched affine surface is connected. Using Lemma 3.3, we are able to follow the proof used in \cite{Ta} in the context of flat surfaces (where a length is globally defined). Therefore, we only give a sketch of the proof.

\begin{proof}[Proof of Theorem 1.2]
The number of interior edges of any geometric triangulation of a branched affine surface with boundary does not depend on the triangulation (we have just to sum the angles at the conical singularities). We assume by contradiction there is a counterexample $X$ with a minimal number of interior edges and two geometric triangulations $S$ and $T$ that cannot be joined by a sequence of flips. There is no interior edge that belongs to both $S$ and $T$ because otherwise we could cut along this edge and get a simpler surface where the induced triangulations are joined by a chain of flips. By forgetting the cut, we would have a sequence of flips between $S$ and $T$. For the same reasons, for every edge $x$ of $S$ and every edge $y$ of $T$, $x$ and $y$ always have a nontrivial intersection. Indeed, if there were such a pair of edges, we could complete $x$ and $y$ to get a new geometric triangulation $Z$ (Lemma 2.7). Triangulations $S$ and $T$ would have each a common edge with $Z$. There would be chain of flips from $S$ to $Z$ and then from $Z$ to $T$.\newline
For every pair of intersecting saddle connection $\alpha$ and $\beta$, we can desingularize the intersection points and get topological arcs that do not intersect $\alpha$ and $\beta$ or with each other. Thanks to Lemma 3.3, these desingularized arcs have geodesic representatives. Therefore, if there are several connected components in $Sing(X)$, any geometric triangulation has some edges relating them. Such edges for $S$ and $T$ intersect each other. Desingularization provides new saddle connections that could be completed to form another geometric triangulation $Z$ that would be an intermediate step in a chain of flips between $S$ and $T$. Therefore, the singular locus of our minimal counterexample is connected. For the same reasons, a genus different from zero would imply existence of saddle connections whose free homotopy class is nontrivial and desingularization would provide in the same way a chain of flips. This implies that $X$ is a polygon (genus zero with a connected singular locus). Branched affine polygons have trivial holonomy. They are just translation surfaces. Connectedness of their flip graph has already been proved in \cite{Ta}.\newline
\end{proof}

\section{Invariant $\alpha$ of branched affine surfaces}

For a compact topological surface of genus $g$ with $n$ conical singularities, the moduli space of triangulable branched affine structures is a complex-analytic orbifold of dimension $4g-4+2n$. Indeed, computation of Euler characteristic shows that the surface is formed by $4g-4+2n$ affine triangles. Each of them is characterized by a complex parameter and there is no constraint concerning the gluing of sides. We denote by $T_{g,n}$ this moduli space.\newline

In moduli spaces of translation surfaces of normalized area, the degeneracy of structures is controlled by the systole: the length of the smallest saddle connection. Parts where the systole is bounded by below are relatively compact in the moduli space. Compactification of the moduli spaces involve collision of singularities and stable curves (a loop degenerates and becomes a double point), see \cite{BCGGM}.\newline

We do not have such quantities for branched affine structures. However, there is function $\theta$ that is the maximal angle of a hyperbolic cylinder embedded in the surface. Theorem 2.8 initially proved by Veech proves that triangulability is equivalent to condition $\theta < \pi$. It is an open problem stated in \cite{DFG} to characterize dilation surface with $\theta=0$. Does this condition characterizes translation surfaces ?\newline

Function $\theta$ clearly controls one kind of degeneracy of triangulated branched affine structures: when a hyperbolic cylinder becomes too long to be triangulated. Here, we introduce quantity $\alpha$ that should control every kind of degeneracy. It is defined through a minimax process.

\begin{defn}
For a triangulated branched affine surface $X$ and one of its geometric triangulations $T \in \mathcal{T}$, we define $\alpha_{X,T}$ to be the smallest angle for a triangle of $T$. Then, we define $\alpha_{X}=\underset{T\in \mathcal{T}}\sup(\alpha_{X,T})$.
\end{defn}

In any geometric triangulation, the smallest angle is at most $\frac{\pi}{3}$, surfaces tiled by equilateral triangles satisfy $\alpha=\frac{\pi}{3}$. They have been studied in \cite{BG} as extremal exemples for systolic inequalities in translation surfaces.

\begin{prop}
For any topological surface of genus $g$ with $n$ marked points and any $t>0$, $\left\{ X \in T_{g,n}~|\alpha_{X} \geq t  \right\}$ is compact.
\end{prop}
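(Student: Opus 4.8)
The plan is to prove sequential compactness directly: given a sequence $(X_k)$ in $\{X \in T_{g,n} \mid \alpha_X \geq t\}$, I will extract a subsequence converging in $T_{g,n}$ to a limit that still satisfies $\alpha \geq t$. Throughout, set $N = 4g-4+2n$, the number of triangles in any geometric triangulation. Recall that $T_{g,n}$ is charted, for each combinatorial (topological) triangulation type $\tau$, by the positions of the $N$ developed triangles, that is, by $N$ complex parameters recording the shapes \emph{and the relative sizes} of the triangles; convergence of these parameters to a nondegenerate configuration is convergence in $T_{g,n}$. For each $k$, since $\alpha_{X_k} = \sup_{T} \alpha_{X_k,T} \geq t$, I choose a geometric triangulation $T_k$ with $\alpha_{X_k,T_k} > t - 1/k$. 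A closed surface of genus $g$ with $n$ marked points carries only finitely many topological triangulations with $N$ faces (there are finitely many ways to glue $N$ triangles in pairs along their edges), so after passing to a subsequence I may assume all $T_k$ share one combinatorial type $\tau$; from now on I work in the chart of $\tau$ and write $s_k$ for the coordinate of $X_k$.

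The geometric heart is to show that the angle lower bound confines the $s_k$ to a compact region. For $k$ large, $\alpha_{X_k,T_k} > t - 1/k \geq t/2$ means every angle of every triangle lies in $[t/2,\pi-t]$, so the shapes already range over a compact set. To control the relative sizes hidden in $s_k$, I develop the triangulation: normalize one edge to have length one, then propagate through the dual graph of $\tau$, which is connected on $N$ vertices. At each step the shared edge has a length already fixed, and the law of sines, applied in a triangle all of whose angles lie in $[t/2,\pi-t]$, bounds the ratio of any two of its sides by $1/\sin(t/2)$. Consequently every edge length is pinned between $(\sin(t/2))^{N}$ and $(\sin(t/2))^{-N}$, so all developed vertices, hence all coordinates $s_k$, lie in a fixed compact set $K$ on which, moreover, no edge is allowed to degenerate.

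It then remains to pass to the limit. I extract $s_k \to s_\infty \in K$. Because edge lengths stay bounded away from $0$, the limit is a genuine nondegenerate gluing of $N$ triangles of type $\tau$: no two singularities collide, the genus and the number of cone points are preserved, and $\tau$ is an honest geometric triangulation of the resulting surface $X_\infty$. In particular $X_\infty$ admits a geometric triangulation, so by Theorem 1.1 it has no hyperbolic cylinder of angle at least $\pi$, whence $X_\infty \in T_{g,n}$ and $s_k \to s_\infty$ is convergence in the orbifold. Finally, the smallest angle is a continuous function of the chart coordinates, so $\alpha_{X_\infty,\tau} = \lim_k \alpha_{X_k,T_k} \geq \lim_k (t - 1/k) = t$, and therefore $\alpha_{X_\infty} \geq \alpha_{X_\infty,\tau} \geq t$. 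This produces the required convergent subsequence with limit in the set.

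I expect the main obstacle to lie in the size control rather than the shape control. Since the chart coordinates record relative sizes and not merely angles, one must rule out a subsequence in which some triangles shrink or blow up relative to others, which geometrically would be a collision of singularities or the formation of an ever longer hyperbolic cylinder. The developing argument above is precisely what converts the uniform angle bound into a uniform bound on edge-length ratios that forecloses this behaviour; verifying carefully that this bound keeps $s_k$ inside a single orbifold chart whose closure avoids the degenerate boundary is the delicate point. This is also where the connection with the function $\theta$ surfaces, since an angle bound $\alpha \geq t$ necessarily keeps $\theta$ bounded away from $\pi$.
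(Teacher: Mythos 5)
Your proof is correct and follows essentially the same route as the paper: finitely many combinatorial triangulation types, a subsequence realizing one type, compactness of the space of triangle shapes with all angles bounded below, gluing the limiting triangles along the fixed incidence type, and continuity of the smallest angle (your choice of $T_k$ with $\alpha_{X_k,T_k}>t-1/k$ in fact handles the possibly unattained supremum more carefully than the paper, which glosses this). The one divergence, your size-control step, is superfluous in this category: in $T_{g,n}$ each triangle contributes a single complex shape parameter and, as the paper notes, there is \emph{no constraint concerning the gluing of sides} (edges are identified by affine maps, so lengths need not match), hence the angle bound alone already confines the chart coordinates to a compact set, and your law-of-sines developing argument, while valid for the sizes determined along a spanning tree of the dual graph, guards against a degeneration mode that is not actually a parameter of this moduli space.
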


\begin{proof}
Up to isomorphism of their incidence structures, there are finitely many topological triangulations on a topological surface of genus $g$ and $n$ marked points.
For any sequence of branched affine surfaces of $T_{g,n}$, there is a isomorphism type of topological trangulations such that there is a subsequence of branched affine structures for which at least one geometric triangulation realizes this incidence type. Then, the subset of the parameter space of a affine triangles with angles bounded by below is compact. Therefore, for every triangle, there is an accumulation point that is a nondegenerate triangle. We glue every accumulation triangle following the incidence type and get an accumulation point the sequence in the moduli space. Function $\alpha$ is clearly continuous so the preimage of $[t,+\infty[$ is a closed set.
\end{proof}

Degeneration controlled by $\theta$ is also controlled by $\alpha$

\begin{prop}
For any triangulable branched affine surface, the angle of any embedded hyperbolic cylinder is at most $\pi-\alpha$.
\end{prop}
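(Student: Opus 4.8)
The plan is to bound $\alpha_X$ from above. Since $\alpha_X=\sup_{T}\alpha_{X,T}$ and $\alpha_{X,T}$ is the \emph{smallest} angle occurring in the triangulation $T$, it suffices to produce, for every geometric triangulation $T$ and every embedded hyperbolic cylinder $C$ of angle $\beta$, a single triangle of $T$ carrying an angle strictly smaller than $\pi-\beta$. This gives $\alpha_{X,T}<\pi-\beta$ for every $T$, hence $\alpha_X\le\pi-\beta$, which is the asserted inequality $\beta\le\pi-\alpha$. Because $X$ is triangulable we have $\beta<\pi$, so the arc of directions below has genuine positive width.

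First I would develop the cylinder. The open cylinder $C$ is isometric to the quotient of an open Euclidean sector $\Sigma^\circ=\{re^{i\phi}:r>0,\ 0<\phi<\beta\}$ by the homothety $z\mapsto\lambda z$, where $\lambda>1$ is the dilation ratio of its core geodesics. Its universal cover is $\Sigma^\circ$ and the developing map is the inclusion $\Sigma^\circ\hookrightarrow\C$; the two boundary components of $C$ develop to the rays $\ell_0=\{\phi=0\}$ and $\ell_\beta=\{\phi=\beta\}$. Since there is no conical singularity in the interior of $C$, any edge of $T$ meeting that interior is there a geodesic segment, lifting to a straight chord of $\Sigma$. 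Such a chord cannot have both endpoints on the same ray (a straight segment between two points of a ray through the origin lies on that ray), so each chord joins $\ell_0$ to $\ell_\beta$. These chords are pairwise non-crossing, being lifts of the disjoint edges of $T$, and invariant under $z\mapsto\lambda z$; they therefore cut $\Sigma$ into a $\lambda$-periodic family of strips $Q_1,Q_2,\dots$, each bounded by two consecutive chords $c_i,c_{i+1}$ and arcs of the two rays. At least one such strip exists, since the annulus $C$ is not contained in any single disk-triangle.

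The crux is to recognize each strip as the trace of an honest triangle and to compute its angle. As the interior of $Q_i$ contains no vertex or edge of $T$, the strip is exactly the intersection of one triangle $\Delta_i$ of $T$ with $\Sigma$; the bounding chords $c_i,c_{i+1}$ lie on two distinct edges of $\Delta_i$, which meet at a genuine vertex $V_i$ of $\Delta_i$ (possibly pushed outside $\Sigma$ when $\Delta_i$ pokes through $C$, or on a ray in the degenerate case). The interior angle of $\Delta_i$ at $V_i$ equals the angle between the two chords. Writing a chord from $(a,0)\in\ell_0$ to $b(\cos\beta,\sin\beta)\in\ell_\beta$, its direction is $\delta=\arg\!\big((a-b\cos\beta)-i\,b\sin\beta\big)$, and a direct check shows that as $a/b$ runs over $(0,\infty)$ this $\delta$ ranges monotonically over the open arc $(\beta-\pi,0)$ of width exactly $\pi-\beta$. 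Hence $\delta_i,\delta_{i+1}$ both lie in an arc of width $\pi-\beta$, so the apex angle $|\delta_i-\delta_{i+1}|$ is strictly less than $\pi-\beta$. Thus $\Delta_i$ has an angle $<\pi-\beta$, as required, and the reduction of the first paragraph closes the argument.

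The step I expect to be the main obstacle is precisely the one treated in the previous paragraph. A priori the triangles of $T$ need not have their vertices on $\partial C$: a triangle may cross the entire cylinder, so one cannot naively invoke a configuration with two vertices on one boundary and one on the other, and indeed a triangle touching both rays can otherwise have all angles bounded away from $\pi-\beta$. The device of slicing $C$ along consecutive crossing chords and identifying each inter-chord strip with the trace of a single triangle, whose apex $V_i$ is the meeting point of the two bounding edges, is exactly what turns the uncontrolled global picture into the clean one-parameter directional computation, after which the bound on the apex angle is forced by the width $\pi-\beta$ of the admissible arc of chord directions.
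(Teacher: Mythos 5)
Your proof is correct and takes essentially the same route as the paper: in both arguments one fixes an arbitrary geometric triangulation, finds a triangle of it with two sides crossing the hyperbolic cylinder, observes that the directions of such crossing sides avoid the interval of closed-geodesic directions of width $\beta$ and hence lie in a complementary interval of width $\pi-\beta$, and concludes that the angle they enclose, which bounds $\alpha_{X,T}$ from above, is at most $\pi-\beta$. Your development of the cylinder as the sector $\Sigma^\circ$ modulo $z\mapsto\lambda z$ and the strip-by-strip identification of triangle traces is simply a more detailed rendering of the paper's terse claim that ``a generic point of the interior of the cylinder belongs to the interior of a triangle such that at least two sides are trajectories that cross both boundaries of the cylinder.''
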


\begin{proof}
For any hyperbolic cylinder of angle $\beta$ and any geometric triangulation of $X$, a generic point of the interior of the cylinder belongs to the interior of a triangle such that at least two sides are trajectories that cross both boundaries of the cylinder. We can define directions that are not ambiguous in the union of the triangle and the cylinder (linear holonomy inside the cylinder preserves the directions and the triangle is simply connected). None of these two cylinders is parallel to a closed geodesic of the cylinder (because it should cross each of them). Therefore, their direction belongs to an interval whose length is $\pi-\beta$. Therefore, the angle between these two sides is at most $\pi-\beta$ and at least $\alpha$. Consequently, we have $\beta \leq \pi -\alpha$.\newline
\end{proof}

\section{On compactifications of spaces of branched affine surfaces}

We first consider few examples of degeneration of branched affine surfaces and possible compactifications.

\subsection{Some strata of branched affine surfaces on spheres with four singularities}

We consider branched affine surfaces of genus zero with four conical singularities of angles $\theta_{1},\dots,\theta_{4}$ and dilation ratios $\lambda_{1},\dots,\lambda_{4}$. We have $\sum \theta_{i} = 4\pi$. We consider strata of branched affine structures where the numbers are fixed.\newline
In strata where we have $\theta_{1},\theta_{2},\theta_{3} < \pi$ and $\theta_{4}>\pi$, an easy application of Lemma 2.8 shows that any surface is star-shaped with three self-folded triangles around a small singularity (see Figure 1) and a central triangle (possibly degenerate). Its parametrization is quite easy because the affine type of each of the three self-folded triangles is completely characterized by the conical singularity (an angle and a ratio of lengths). A branched affine structure is thus characterized by the affine type of the central triangle. If this triangles becomes flat, then a vertex appears in the boundary of one self-folded triangle and if we push the deformation further, another central triangle appears inside this triangle. Therefore, the locus where surfaces are star-shaped is connected. There are three singularities in the moduli space, corresponding to the degeneracy of each of the three sides of the central triangle. The connected component is a sphere with three punctures.\newline

\begin{figure}
\includegraphics[scale=0.3]{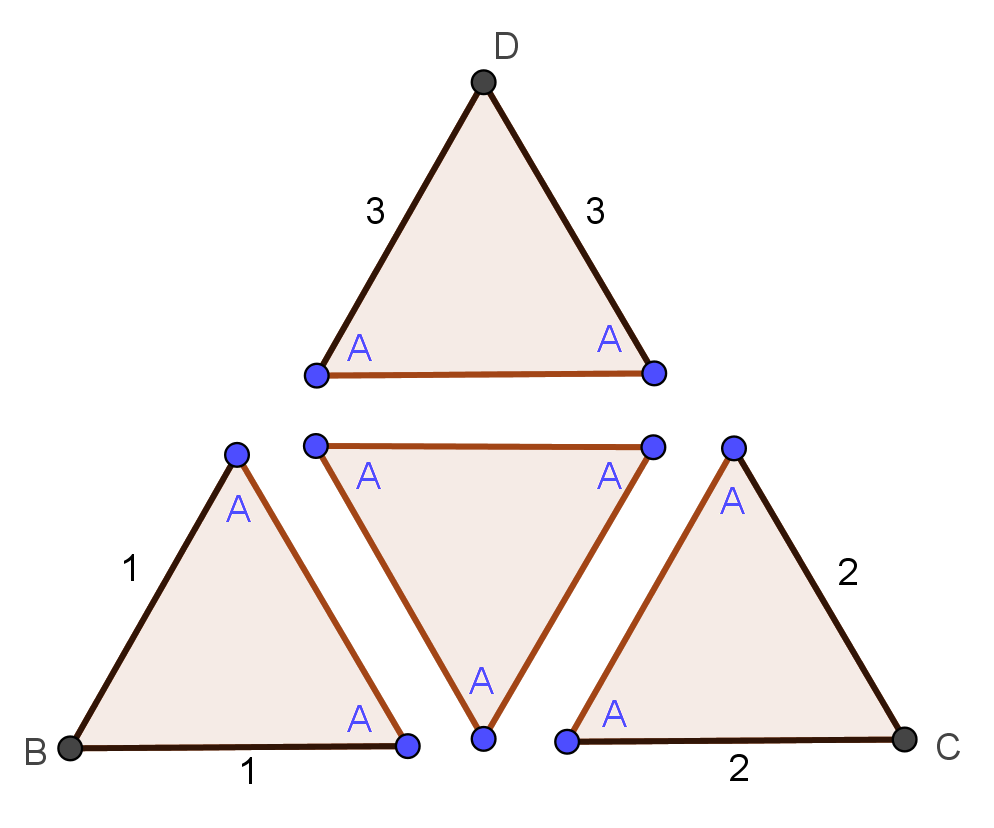}
\caption{Star-shaped branched affine sphere with four singularities}
\end{figure}

We can compactify the moduli space by adding a degenerated object at every puncture of the stratum. Without loss of generality, we consider the shrinking of the side of the central triangle bounding the self-folded triangle around the singularity of angle $\theta_{1}$. The surface is disconnected into two components. One is the union of the self-folded triangles around $\theta_{2}$ and $\theta_{3}$. The other is the self-folded triangle around $\theta_{1}$. The problem is that the boundary of this triangle is identified with a point (the conical singularity $A$, see figure 2). The solution is to take this triangle as infinite. The directions at the infinity can be interpreted as a singularity of angle $-\theta_{1}$, just like poles in translation surfaces (see \cite{Ta1}). The triangle becomes a sphere with two singularities and we can identify the pole with the rest of the singularity $A$ in the rest of the surface. We get a branched affine structure on a stable curve which is a the union of two sphere glued on a double point. We remark that the sum of the angles for this singularity is $-\theta_{1}+(2\pi-\theta_{2}-\theta_{3})=\theta_{4}-2\pi$.
\newline

\begin{figure}
\includegraphics[scale=0.3]{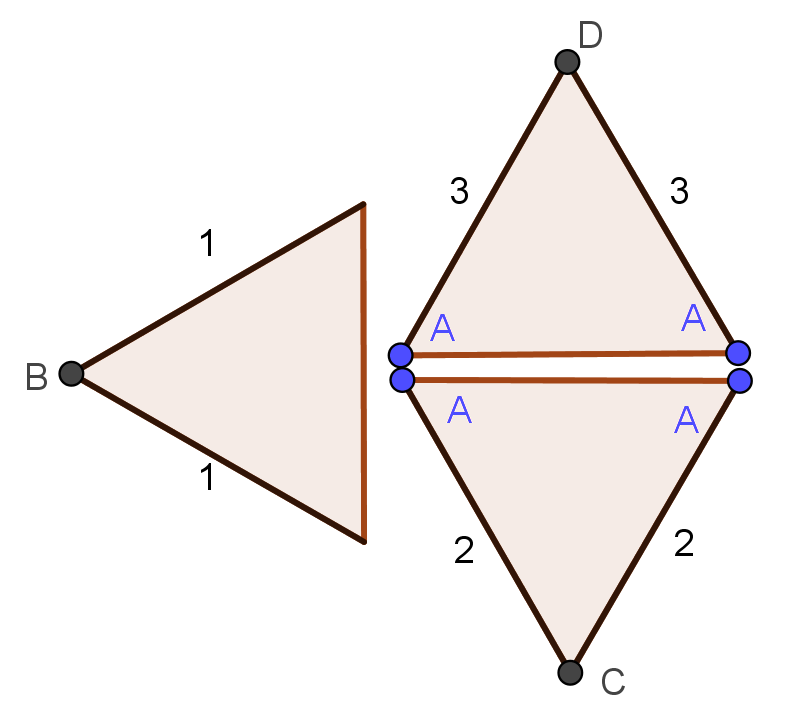}
\caption{Degeneracy of the branched affine structure}
\end{figure}

\subsection{Degeneration of Branched affine tori}

We consider a family of branched affine tori formed by a portion of annulus of angle $\theta<\pi$ with a dilation ratio of $\lambda$ with a marked point. We are looking for an interpretation of the limits $\lambda \to 1$ and $\lambda \to +\infty$.\newline
Annuli with angles smaller than $\pi$ can be triangulated as trapezoids. If the dilation ratio between their basis tends to $1$, without changing the angle between the lateral sides, then the basis become infinitely long. Just like in the compactification of translation surfaces, infinitely long cylinders can be pinched to get a pair of simple poles (corresponding to singularities of angle zero). Thus, the limit object of the torus is a pinched torus, that is a sphere with two identified points. However, we lost the information about angle $\theta$.\newline

When the dilation ratio tends to infinity, the picture is different. One of the basis of the trapezoid shrinks and a point is identified with a side. We get a self-folded triangle around a singularity of angle $\theta$. The solution is, just like degeneracy of star-shaped branched affine spheres, to set the triangle to be infinite and replace it by an infinite cone of angle $\theta$. The directions at infinity are interpreted as a singularity of angle $-\theta$. We get a sphere with two singularities that are then glued on each other. It is also a pinched torus.

\subsection{Some remarks}

The few examples we considered show that we clearly need to expand the scope to singularities of pole type. Indeed, poles of arbitrary order (arbitrary angle here) appear in \cite{BCGGM} in the compactification of strata of Abelian differentials. It would be interesting to know if we can get a meaningful compactification of branched affine structures with only two new ingredients: poles and stable curves.\newline

Poles need further geometric work because neighborhoods of these singularities are not triangulable. The notion of core (see \cite{HKK,Ta1}) would have to be generalized to the case of branched affine structures with these new singularities. Classically, the core is the convex hull of the conical singularities and is equal to the triangulated locus in the case of branched affine structures with only conical singularities.\newline

We can also remark that in the example of the sphere with four singularities, the degeneration never corresponds to the collision of two singularities.
\newline

\textit{Acknowledgements.} I thank Selim Ghazouani, Dmitry Novikov and Pierre Villemot for their valuable remarks. The author is supported by a fellowship of Weizmann Institute of Science. This research was supported by the Israel Science Foundation (grant No. 1167/17).\newline

\nopagebreak
\vskip.5cm

\begin{thebibliography}{10}

\bibitem{BCGGM}
M. Bainbridge, D. Chen, Q. Gendron, S. Grushevsky, M. M\"oller
\newblock {\em Strata of $k$-differentials}.
\newblock {Algebraic Geometry}, Volume 6, Number 2, 196-233,
 2019.

\bibitem{BG}
C. Boissy, S. Geninska
\newblock {\em Systoles in translation surfaces}.
\newblock {Preprint, arXiv:1707.05060},
2017.

\bibitem{DFG}
E. Duryev, C. Fougeron and S. Ghazouani.
\newblock {\em Affine surfaces and their Veech groups}.
\newblock {Preprint, arXiv:1609.02130},
2016.

\bibitem{FST}
S. Fomin, M. Shapiro, D. Thurston
\newblock {\em Cluster algebras and triangulated surfaces. Part I: Cluster complexes}.
\newblock {Acta Mathematica}, Volume 201, Number 1, 83-146,
  2008.

\bibitem{G}
S. Ghazouani.
\newblock {\em Teichmüller dynamics, dilation tori and piecewise affine homeomorphisms of the circle}.
\newblock {Preprint, arXiv:1803.10129},
2018.

\bibitem{G1}
S. Ghazouani.
\newblock {\em Mapping class group dynamics and the holonomy of branched affine structures}.
\newblock {Mathematische Zeitschrift},Volume 289, Issue 1–2, 1–23,
June 2018.

\bibitem{G2}
S. Ghazouani.
\newblock {\em Une invitation aux surfaces de dilatation}.
\newblock {Preprint, arXiv:1901.08856},
2019.

\bibitem{HKK}
F. Haiden, L. Katzarkov, M. Kontsevich.
\newblock {\em Flat surfaces and stability structures}.
\newblock {Publications de l'IHES}, Volume 126, Issue 1, 247-318,
 November 2017.

\bibitem{Ta}
G. Tahar
\newblock {\em Geometric Triangulations and Flips}.
\newblock {Comptes Rendus de l'Acad\'emie des sciences}, Vol 357, Issue 7, pp 620-623,
2019.

\bibitem{Ta1}
G. Tahar
\newblock {\em Counting saddle connections in flat surfaces with poles of higher order}.
\newblock {Geometriae Dedicata}, Volume 196, Issue 1, 145-186,
  October 2018.

\bibitem{V}
W. Veech
\newblock {\em Informal notes on flat surfaces}.
\newblock {Unpublished course notes},
  2008.

\bibitem{Zo}
A. Zorich.
\newblock {\em Flat Surfaces}.
\newblock {Frontiers in Physics, Number Theory and Geometry, 439-586},
  2006.


\end{thebibliography}
\end{document}